\newcommand{\C}{\mathbf{C}}
\newcommand{\F}{\mathbf{F}}
\renewcommand{\P}{\mathbf{P}}
\newcommand{\Q}{\mathbf{Q}}
\newcommand{\sX}{\mathcal{X}}
\newcommand{\sY}{\mathcal{Y}}
\newcommand{\Coker}{\operatorname{Coker}}
\newcommand{\IM}{\operatorname{Im}}
\newcommand{\Pic}{\operatorname{Pic}}
\newcommand{\NS}{\operatorname{NS}}
\newcommand{\cl}{\operatorname{cl}}
\newcommand{\tr}{{\operatorname{tr}}}
\renewcommand{\hom}{{\operatorname{hom}}}
\newcommand{\car}{\operatorname{car}}
\newcommand{\by}{\xrightarrow}
\newcommand{\iso}{\by{\sim}}
\newcommand{\inj}{\hookrightarrow}
\DeclareFontFamily{U}{wncy}{}
\DeclareFontShape{U}{wncy}{m}{n}{%
<5>wncyr5%
<6>wncyr6%
<7>wncyr7%
<8>wncyr8%
<9>wncyr9%
<10>wncyr10%
<11>wncyr10%
<12>wncyr6%
<14>wncyr7%
<17>wncyr8%
<20>wncyr10%
<25>wncyr10}{}
\DeclareMathAlphabet{\cyr}{U}{wncy}{m}{n}
\newcommand{\Sha}{\cyr{X}}
\newtheorem{thm}{Théorème}
\newtheorem{lemme}{Lemme}
\newtheorem{prop}{Proposition}
\theoremstyle{remark}
\newtheorem{rque}{Remarque}
\newtheorem{qn}{Question}
\begin{document}
\title{Sur la conjecture de Tate pour les diviseurs}
\author{Bruno Kahn}
\address{IMJ-PRG\\Case 247\\
4 place Jussieu\\
75252 Paris Cedex 05\\France}
\email{bruno.kahn@imj-prg.fr}
\date{$1^{er}$ février 2023}
\begin{altabstract} We prove that the Tate conjecture in codimension $1$ over a finitely generated field follows from the same conjecture for surfaces over its prime subfield. In positive characteristic, this is due to de Jong--Morrow over $\F_p$ and to Ambrosi for the reduction to $\F_p$. We give a different proof than Ambrosi's, which also works in characteristic $0$; over $\Q$, the reduction to surfaces follows from a simple argument using Lefschetz's $(1,1)$ theorem.
\end{altabstract}
\begin{abstract} On montre que la conjecture de Tate en codimension $1$ sur un corps de type fini résulte de la même conjecture pour les surfaces sur son sous-corps premier. En caractéristique positive, ceci est dû à de Jong--Morrow sur $\F_p$ et à Ambrosi  pour la réduction à $\F_p$. Nous montrons cette dernière réduction d'une manière différente, qui fonctionne aussi en caractéristique zéro. Sur $\Q$, la réduction aux surfaces se fait par un argument facile reposant sur le théorème $(1,1)$ de Lefschetz.
\end{abstract}
\keywords{Tate conjecture, divisors}
\subjclass{[2020] 14C25}
\maketitle

\section*{Introduction} La conjecture de Tate est l'une des plus célèbres en géométrie arithmétique: formulée en 1965 dans \cite{tate0}, elle prédit que l'application classe de cycle $l$-adique \cite{cycle}
\begin{equation}\label{eq3}
\cl^i_X:CH^i(X)\otimes \Q_l\to H^{2i}(X_{k_s},\Q_l(i))^{Gal(k_s/k)}
\end{equation}
est surjective pour tout entier $i\ge 0$ et toute variété projective lisse $X$ sur un corps $k$ de type fini, de caractéristique différente de $l$ et de clôture séparable $k_s$. Elle a été démontrée dans de nombreux cas particuliers, mais reste ouverte en général même pour $i=1$. Pour un exposé détaillé qui reste largement d'actualité, je renvoie à \cite{tate} (voir aussi \cite{li-zhang}).

Il est connu que pour $i=1$, la conjecture de Tate pour les corps premiers l'implique en général: en caractéristique zéro cela se déduit du théorème de spécialisation des groupes de Néron-Severi dû à Yves André (\cite[Theorem 5.2 3]{andre}, \cite[1.3.3]{ambrosi}), et en caractéristique positive cela résulte d'un théorème d'Emiliano Ambrosi \cite[th. 1.2.1]{ambrosi}. Ambrosi démontre plus:  la conjecture de Tate en codimension $i$ sur les corps finis l'implique pour tous les corps de type fini de caractéristique positive, sous une hypothèse de semi-simplicité qui résulte de la conjecture de Tate quand $i=1$. Sa preuve,  étendant au cas d'un corps fini un argument d'André en caractéristique zéro \cite[\S 5.1]{andre}, utilise le théorème global des cycles invariants de Deligne et la cyclicité du groupe de Galois absolu de $\F_p$. 

L'objet de cette note est d'offrir une démonstration plus élémentaire de cette réduction (uniquement pour $i=1$), qui fonctionne uniformément en toute caractéristique: inspirée de la preuve de \cite[th. 8.32 a)]{sheaf}, elle consiste à étendre la conjecture de Tate aux variétés lisses \emph{ouvertes} (théorème \ref{t1}). Cette idée, due originellement à Jannsen \cite{jannsen}, permet de remplacer avantageusement le recours au théorème global des cycles invariants par une simple utilisation du critère de dégénérescence des suites spectrales de Deligne \cite{deligne1,deligne2}. Un argument élémentaire de correspondances permet par ailleurs de se débarrasser aisément du problème de semi-simplicité qui apparaît aussi chez Jannsen (lemme \ref{l1} et proposition \ref{p1}).  

D'après de Jong et Morrow \cite{morrow}, la conjecture de Tate pour $i=1$ en caractéristique positive se réduit même au cas des surfaces sur un corps fini (cette dernière conjecture étant par ailleurs équivalente à la conjecture de Birch et Swinnerton-Dyer pour les variétés abéliennes sur les corps de fonctions d'une variable sur $k$, voir remarque \ref{r2}). La même chose est vraie en caractéristique zéro (théorème \ref{t0}), en utilisant le théorème (1,1) de Lefschetz via les théorèmes de comparaison cohomologiques.

\newpage

\section*{English introduction} The famous Tate conjecture, which predicts that the $l$-adic cycle class map \eqref{eq3} is surjective for smooth projective varieties $X$ over finitely generated fields $k$, was formulated back in 1965 \cite{tate0}, but remains open up to now, even though it has been proven in important special cases \cite{tate,li-zhang}. This is so even for $i=1$.

In this case, the Tate conjecture over prime fields implies it in general by work of Emiliano Ambrosi \cite[th. 1.2.1 and 1.3.3]{ambrosi}. In characteristic $0$, the argument uses Yves André's specialisation theorem for the Néron-Severi group \cite[Theorem 5.2 3]{andre}, while in positive characteristic, Ambrosi's proof relies in particular on the cyclicity of $Gal(\bar \F_p/\F_p)$.

The aim of this note is to offer a simple proof of this reduction, which works uniformly in all characteristics (Proposition \ref{t2}). It is however special to codimension $1$, while Ambrosi's argument also works (in positive characteristic) in any codimension $i$ under a semi-simplicity hypothesis which follows from Tate's conjecture if $i=1$. 

There are two ideas: the first is to get rid of the semi-simplicity issue by a simple argument of correspondences (Lemma \ref{l1}), and the second is to extend the Tate conjecture for divisors from smooth projective to (all) smooth varieties (Theorem \ref{t1}). This idea goes back to Jannsen \cite{jannsen}; its point is that it allows us to replace Ambrosi's use of Deligne's global invariant cycles theorem (an argument going back to André in characteristic $0$ \cite[\S 5.1]{andre}) by the degeneracy of the $l$-adic Leray spectral sequence, also due to Deligne. This is a reformulation of the arguments given in  \cite[proof of th. 8.32 a)]{sheaf}, specialised to codimension $1$ (see also \cite[Th. 3.4]{glr}); the first idea is new.

When $k=\F_p$, a theorem of de Jong and Morrow \cite{morrow} even reduces the Tate conjecture for divisors to surfaces. (This case is in turn equivalent to the Birch and Swinnerton-Dyer conjecture for abelian varieties over global fields of positive characteristic, see Remark \ref{r2}.) Over $\Q$, the same reduction holds (Theorem \ref{t0}): the proof involves the Lefschetz (1,1) theorem via the cohomological comparison theorems. 

\enlargethispage*{30pt}

\section{Notations} Soient $k$ un corps et $X$ une $k$-variété lisse. Soient $k_s$ une clôture séparable de $k$ et $l$ un nombre premier différent de $\car k$. On note $H^j(X,i):=H^j(X_{k_s},\Q_l(i))$; de même pour la cohomologie à supports. On note $\cl^i_X:CH^i(X)\otimes \Q_l\to H^{2i}(X,i)$ la classe de cycle, et simplement $\cl_X$ pour $\cl^1_X$. 

\section{Une rétraction}\label{s1}

Supposons $X$ projective de dimension $d$. Pour $i\le d$, choisissons une base $(\bar Z^1,\dots,\bar Z^r)$ du groupe $N^i(X)_\Q$ des cycles de codimension $i$ sur $X$ modulo l'équivalence numérique, à coefficients rationnels et notons $(\bar Z_1,\dots,\bar Z_r)$ la base duale dans $N_i(X)_\Q$, de sorte que $\langle\bar Z^i,\bar Z_j\rangle=\delta_{ij}$\footnote{Rappelons que $N^i(X)_\Q$ et $N_i(X)_\Q$ sont des $\Q$-espaces vectoriels de dimension finie \cite[ex. 19.1.4]{fulton}, mis en dualité par l'accouplement d'intersection.}. Relevons les $\bar Z^i$ and $\bar Z_i$ en des classes de  cycle $Z^i\in CH^i(X)_\Q$, $Z_i\in CH_i(X)_\Q$. Soit (\emph{cf.} \cite[dém. de la prop. 7.2.3]{kmp})
\[e=\sum_a Z^a\times Z_a\in CH^d(X\times X)_\Q\]
vu comme correspondance algébrique, où $\times$ est le cross-produit des cycles. 

\begin{lemme}\label{l0}  On a $e^2=e$.
\end{lemme}

\begin{proof} Pour $Z,Z'\in CH^i(X)$ et $T,T'\in CH_i(X)$, on a l'identité
\[(Z\times T)\circ (Z'\times T')=\langle Z',T\rangle Z\times T'\]
dans l'anneau des correspondances de Chow $CH^d(X\times X)$, où $\langle,\rangle$ est le produit d'intersection: cela résulte immédiatement de la définition de la composition des correspondances \cite[Déf. 16.1.11]{fulton}.
\end{proof}

\begin{lemme}\label{l1} Soit $V$ le sous-espace vectoriel de $\IM \cl^i_X$ engendré par les $Z^a$. L'action de $e$ sur $H^{2i}(X,i)$ définit une rétraction $G$-équivariante de l'inclusion $V\inj H^{2i}(X,i)$. En particulier, si $i=1$, elle définit une rétraction de $\cl_X$.
\end{lemme}

\begin{proof} Soit $x\in H^{2i}(X,i)$. Pour $(Z,T)\in CH^i(X)\times CH_i(X)$, on a
\[(Z\times T)^*x = < x,\cl_i(T)> \cl^i(Z)\]
où $<,>$ est l'accouplement de Poincaré, cf. \cite[déf. 16.1.2]{fulton}. Cela montre que $e(H^{2i}(X,i))\allowbreak\subset V$, et aussi que sa restriction à ce sous-espace est l'identité. 

Le cas $i=1$ résulte du théorème de Matsusaka \cite{matsusaka} (équivalences homologique et numérique coïncident en codimension $1$).
\end{proof}

\section{Passage aux variétés lisses ouvertes}\label{s2}

Supposons $k$ de type fini, et $X$ seulement lisse. On s'intéresse à l'extension suivante de la conjecture de Tate:

\begin{description} 
\item[$T(X)$] l'homomorphisme ``classe de diviseur'' $\cl_X:\Pic(X)\otimes \Q_l \to H^2(X,1)^{G}$ est surjectif, où $G=Gal(k_s/k)$.
\item[$T(k)$] $T(X)$ pour toutes les $k$-variétés lisses $X$.
\end{description}

Aux notations près, cette conjecture est due à Jannsen \cite[conj. 7.3]{jannsen}, qui l'étend même aux variétés singulières (avec l'homologie de Borel-Moore). Dans \cite[th. 7.10 b)]{jannsen}, il la réduit au cas des variétés projectives lisses sous une hypothèse de semi-simplicité (b), en bas de \cite[p. 113]{jannsen}) qui n'est pas connue en général même pour $H^2$.  Le but de cette section est de faire cette réduction (théorème \ref{t1}) en évitant l'hypothèse de semi-simplicité grâce à la proposition \ref{p1} ci-dessous. Bien sûr, ceci ne marche que pour les cycles de codimension $1$! 

Dans la suite, on note
\[H^2_\tr(X,1)=\Coker \cl_X.\]

\enlargethispage*{20pt}

\begin{prop}\label{p1} Pour $X$ projective lisse, $T(X)$ équivaut à $H^2_\tr(X,1)^G\allowbreak=0$.
\end{prop}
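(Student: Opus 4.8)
Le plan est d'utiliser la rétraction $e$ des lemmes \ref{l0} et \ref{l1} pour scinder la suite exacte de $G$-modules reliant $\IM \cl_X$ et $H^2_\tr(X,1)$, de sorte que le foncteur des $G$-invariants y devienne exact: c'est précisément ce qui permettra d'éviter toute hypothèse de semi-simplicité. Je commencerais par observer que, pour $i=1$, le théorème de Matsusaka invoqué dans le lemme \ref{l1} entraîne que les classes $Z^a$ engendrent \emph{tout} $\IM \cl_X$; ainsi le sous-espace $V$ de ce lemme coïncide avec $\IM \cl_X$. Comme les $Z^a$ sont des classes de cycles définis sur $k$, leurs images sont $G$-invariantes, d'où $V=\IM \cl_X\subset H^2(X,1)^G$.

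Ensuite, le lemme \ref{l0} donne $e^2=e$ dans l'anneau des correspondances de Chow, donc l'action $e_*$ sur $H^2(X,1)$ est idempotente; par le lemme \ref{l1} elle est $G$-équivariante d'image $V$. L'endomorphisme $e_*$ fournit donc un scindage $G$-équivariant de la suite exacte
\[0\to V\to H^2(X,1)\to H^2_\tr(X,1)\to 0,\]
où par définition $H^2_\tr(X,1)=\Coker \cl_X=H^2(X,1)/V$.

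Comme cette suite est scindée en tant que suite de $G$-modules, l'application du foncteur (exact à gauche) des $G$-invariants la laisse exacte:
\[0\to V^G\to H^2(X,1)^G\to H^2_\tr(X,1)^G\to 0.\]
Puisque $V\subset H^2(X,1)^G$, on a $V^G=V$, et cette suite identifie $H^2_\tr(X,1)^G$ au quotient $H^2(X,1)^G/V$. Or $T(X)$ affirme exactement que $\cl_X$ est surjectif sur $H^2(X,1)^G$, c'est-à-dire que $V=H^2(X,1)^G$; par l'identification précédente, ceci équivaudra à $H^2_\tr(X,1)^G=0$, ce qui conclura.

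Le seul point vraiment délicat — et la raison d'être de l'idempotent $e$ — sera l'exactitude de $(-)^G$ sur la suite ci-dessus: sans scindage $G$-équivariant, les invariants ne seraient qu'exacts à gauche et $H^2_\tr(X,1)^G$ pourrait a priori être strictement plus gros que $H^2(X,1)^G/V$, ce qui ruinerait l'équivalence. Toute la force des lemmes \ref{l0} et \ref{l1} est de fournir ce scindage, remplaçant ainsi l'hypothèse de semi-simplicité de Jannsen.
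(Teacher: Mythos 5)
Your proof is correct and is essentially the paper's own argument spelled out in full: the paper's two-line proof notes that $H^2_\tr(X,1)^G=0\Rightarrow T(X)$ is immediate (left-exactness of $G$-invariants), and that the converse follows from Lemma \ref{l1}, whose $G$-equivariant retraction --- with $V=\IM\cl_X$ thanks to Matsusaka, exactly as you observe --- splits $0\to \IM\cl_X\to H^2(X,1)\to H^2_\tr(X,1)\to 0$ as $G$-modules and makes invariants exact, just as in your argument. No gaps; your write-up correctly identifies the split sequence as the precise mechanism replacing Jannsen's semi-simplicity hypothesis.
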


\begin{proof} L'implication $H^2_\tr(X,1)^G=0$ $\Rightarrow$ $T(X)$ est évidente. L'autre résulte du lemme \ref{l1}. \end{proof}

\begin{lemme}\label{l2} Soit $f:X'\to X$ un morphisme fini et plat de $k$-variétés lisses. Alors $T(X')\Rightarrow T(X)$.
\end{lemme}

\begin{proof} En effet, $f^*:H^2(X,1)\to H^2(X',1)$ admet la rétraction $G$-équivariante $(1/\deg(f))f_*$, et ces deux homomorphismes commutent avec les homomorphismes correspondants entre $\Pic(X)\otimes \Q_l$ et $\Pic(X')\otimes \Q_l$ via 
$\cl_X$ et $\cl_{X'}$.
\end{proof}

\begin{prop}\label{p2} a) Soit $U$ un ouvert de $X$. Alors $T(U)\Rightarrow T(X)$.\\
b) La réciproque est vraie si $X$ est projective.
\end{prop}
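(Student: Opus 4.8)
Le plan est d'exploiter la suite de localisation (Gysin) reliant la cohomologie de $X$, celle de $U$ et la cohomologie à supports dans $Z:=X\setminus U$, et d'en tirer une description du noyau de la restriction $j^*:H^2(X,1)\to H^2(U,1)$.

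Pour a), je commencerais par établir que ce noyau est engendré par les classes de cycle des composantes de codimension $1$ de $Z$, donc contenu dans $\IM\cl_X$. On se ramène au cas d'un diviseur lisse en ôtant de $X$ un fermé $\Sigma$ de codimension $\ge 2$ (contenant le lieu singulier de $Z$, les intersections de composantes et les composantes de codimension $\ge 2$): la pureté entraîne que $H^2(X,1)\to H^2(X\setminus\Sigma,1)$ est un isomorphisme, et pour le diviseur lisse $Z\setminus\Sigma$ la suite de Gysin identifie $\ker j^*$ à l'image de $H^0(Z\setminus\Sigma,\Q_l)$, c'est-à-dire à l'espace engendré par les classes des composantes. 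Comme par ailleurs $\Pic(X)\to\Pic(U)$ est surjectif (tout diviseur sur $U$ se prolonge par adhérence), on a $\IM\cl_U=j^*(\IM\cl_X)$. Une chasse au diagramme conclut: pour $y\in H^2(X,1)^G$, $T(U)$ fournit $v\in\Pic(X)\otimes\Q_l$ avec $j^*(y-\cl_X(v))=0$, donc $y-\cl_X(v)\in\ker j^*\subset\IM\cl_X$ et $y\in\IM\cl_X$.

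Pour b), avec $X$ projective, l'ingrédient supplémentaire est la théorie des poids \cite{deligne1,deligne2}. Je prolongerais la suite de localisation d'un cran:
\[H^2(X,1)\xrightarrow{j^*}H^2(U,1)\xrightarrow{\partial}H^3_Z(X,1).\]
Comme $X$ est projective lisse, $H^2(X,1)$ est pur de poids $0$; un dévissage de $H^3_Z(X,1)$ selon les strates (pureté et décalage de Gysin) montre que ce dernier est de poids $\ge 1$. Sur un corps de type fini, un élément $G$-invariant de $H^2(U,1)$ est de poids $0$ (les gradués de poids non nul n'ont pas d'invariants, faute de valeur propre de Frobenius de poids $0$), donc appartient à $W_0H^2(U,1)$; comme $\partial$ respecte la filtration par le poids et que $W_0H^3_Z(X,1)=0$, on a $\partial(w)=0$ pour tout $w\in H^2(U,1)^G$. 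Ainsi $w=j^*(x)$ avec $x\in H^2(X,1)$. Pour tout $g\in G$ on a $j^*(gx-x)=0$, donc $gx-x\in\ker j^*\subset\IM\cl_X$; l'image $\bar x$ de $x$ dans $H^2_\tr(X,1)$ est alors $G$-invariante, et $T(X)$ jointe à la proposition \ref{p1} (valable car $X$ est projective) donne $H^2_\tr(X,1)^G=0$, d'où $\bar x=0$, $x\in\IM\cl_X$ et $w=j^*(x)\in\IM\cl_U$.

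L'obstacle principal est l'argument de poids de b): il faut vérifier que la suite de localisation est bien une suite de modules galoisiens mixtes dont les morphismes préservent la filtration par le poids, et justifier la concentration en poids $0$ des invariants sur un corps de type fini (via un modèle de type fini sur $\Z$ et Weil II). La partie a), elle, est formelle une fois connue la description de $\ker j^*$.
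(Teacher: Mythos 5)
Votre partie a) est, à la rédaction près, la preuve du texte : identification du noyau de $j^*\colon H^2(X,1)\to H^2(U,1)$ avec l'espace engendré par les classes des composantes de codimension $1$ de $Z$ (semi-pureté, comme dans le diagramme \eqref{eq1}), surjectivité de $\Pic(X)\to\Pic(U)$, puis chasse au diagramme. Une précision tout de même : ces composantes sont celles de $Z_{k_s}$, que $G$ permute, de sorte que l'inclusion $\ker j^*\subset\IM\cl_X$ est fausse en général ; elle est sans conséquence ici parce que votre élément $y-\cl_X(v)$ est $G$-invariant et que $(\ker j^*)^G\subset\IM\cl_X$ : les invariants d'un quotient d'un module de permutation (action à travers un quotient fini, coefficients de caractéristique nulle) sont engendrés par les sommes d'orbites, c'est-à-dire par des classes de diviseurs définis sur $k$.

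Dans b), en revanche, cette même confusion devient un trou véritable, car votre démarche diffère réellement de celle du texte. Le texte se ramène d'abord, grâce à a), à un ouvert assez petit pour que $Z$ porte des diviseurs engendrant $\NS(X)$ ; alors $T(U)$ équivaut à $H^2(U,1)^G=0$, ce qui se lit sur la suite exacte $0\to H^2_\tr(X,1)^G\to H^2(U,1)^G\to H^3_Z(X,1)^G$, les deux extrémités étant nulles (proposition \ref{p1}, et poids $\ge 1$). Vous relevez au contraire directement un invariant $w\in H^2(U,1)^G$ pour $U$ arbitraire — c'est une vraie variante, et votre détour par $W_0$ est d'ailleurs superflu : $\partial w$ est un élément invariant de $H^3_Z(X,1)$, module de poids $\ge 1$, donc $\partial w=0$ sans aucune compatibilité de $\partial$ aux filtrations par le poids. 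Mais l'étape ``$gx-x\in\ker j^*\subset\IM\cl_X$'' échoue : $gx-x$ n'est pas $G$-invariant, et on sait seulement qu'il appartient au sous-espace $V'$ engendré par les classes des composantes géométriques de $Z$ ; or $V'\not\subset\IM\cl_X$ dès que $Z_{k_s}$ a des composantes conjuguées de classes indépendantes (par exemple, deux droites conjuguées sur une quadrique dont les deux familles de droites sont échangées par $G$). Vous n'obtenez donc que l'invariance de $\bar x$ dans $H^2(X,1)/V'$, alors que la proposition \ref{p1} annule les invariants de $H^2(X,1)/\IM\cl_X$ ; le passage de l'un à l'autre n'est pas formel (le $H^1$ continu de $G$ à coefficients dans un $\Q_l$-espace ne s'annule pas en général). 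Réparation possible : sur une extension finie galoisienne $k'/k$ où toutes ces classes sont définies, l'idempotent $e$ du lemme \ref{l1} relatif à $X_{k'}$, moyenné sur $Gal(k'/k)$, donne un projecteur $G$-équivariant d'image $V''$, l'espace de toutes les classes algébriques géométriques de degré $2$ ; comme $(V'')^G=\IM\cl_X$ (même argument de sommes d'orbites, plus Hochschild--Serre à coefficients rationnels), $T(X)$ entraîne $(H^2(X,1)/V'')^G=0$, votre chasse donne alors $x\in V''$, et l'invariance de $w=j^*x$ le fait redescendre dans $\IM\cl_U$. Signalons enfin un point mineur que le texte traite et que vous omettez : sur un corps imparfait, la lissité générique des composantes de codimension $1$ de $Z$, nécessaire à votre dévissage de $H^3_Z(X,1)$, s'obtient après extension radicielle finie, ce qui ne change ni $H^2(X,1)$, ni $\Pic(X)\otimes\Q_l$, ni $G$.
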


\begin{proof} 
Notons que $\cl_X$ se factorise par $\NS(X)\otimes \Q_l$ où $\NS(X)$ est le groupe de Néron-Severi de $X$. Soit $Z=X-U$ (structure réduite). On a un diagramme commutatif aux lignes exactes
\begin{equation}\label{eq1}
\tiny{\begin{CD}
\bigoplus\limits_{x\in Z\cap X^{(1)}} \Q_l@>>> \NS(X)\otimes \Q_l@>>> \NS(U)\otimes \Q_l@>>> 0\\
@V V V @V\bar\cl_X VV @V\bar \cl_U VV\\ 
H^2_Z(X,1)@>\delta >> H^2(X,1)@>>> H^2(U,1)@>>> H^3_Z(X,1)
\end{CD}}
\end{equation}
où la ligne du bas est la suite exacte de cohomologie à supports et la flèche verticale de gauche est surjective (en fait bijective) par semi-pureté \cite[2.2.6 et 2.2.8]{cycle}. 
On en déduit un nouveau diagramme commutatif aux lignes exactes
\begin{equation}\label{eq2}
\tiny{\begin{CD}
&&\bigoplus\limits_{x\in Z\cap X^{(1)}} \Q_l@>>> \NS(X)\otimes \Q_l@>>> \NS(U)\otimes \Q_l@>>> 0\\
&&@V V V @V\bar\cl_X VV @V\bar \cl_U VV\\ 
0@>>> \IM \delta@> >> H^2(X,1)^G@>>> H^2(U,1)^G
\end{CD}}
\end{equation}
où la flèche verticale de gauche est surjective. L'assertion résulte alors d'une petite chasse aux diagrammes.

b) Supposons d'abord $k$ parfait. D'après a), on peut choisir $U$ aussi petit qu'on veut.  Prenons $Z= X-U$ assez gros pour contenir des diviseurs $D_1,\dots, D_r$ dont les classes engendrent $\NS(X)$. Alors $\NS(U)=0$ et il faut montrer que $H^2(U,1)^G=0$. Or le diagramme \eqref{eq1} montre que la suite
\[\NS(X)\otimes \Q_l\by{\bar\cl_X} H^2(X,1)\to H^2(U,1)\to H^3_Z(X,1)\]
est exacte. Avec la notation de la proposition \ref{p1}, on a donc une suite exacte
\[0\to H^2_\tr(X,1)^G\to H^2(U,1)^G\to H^3_Z(X,1)^G.\]

Si $T(X)$ est vrai, le terme de gauche est nul par cette proposition. Il reste à voir que le terme de droite l'est également. Soit $Z'$ la réunion du lieu singulier de $Z$ et de ses composantes irréductibles de codimension $\ge 2$ dans $X$: la suite exacte de cohomologie à supports
\[0\simeq H^3_{Z'}(X,1)\to H^3_Z(X,1)\to H^3_{Z-Z'}(X-Z',1)\simeq H^1(Z-Z',0)\]
où le premier (\emph{resp.} second) isomorphisme est par semi-pureté (\emph{resp.} par pureté) \cite[2.2.8]{cycle}, montre que $H^3_Z(X,1)^G$ s'injecte dans $H^1(Z-Z',0)^G$. Mais ce dernier groupe est trivial, car $H^1(Z-Z',0)$ est mixte de poids $\ge 1$ \cite[cor. 3.3.5]{weilII}.\footnote{Au moins sur un  corps fini, ce dernier point peut se déduire plus élémentairement du théorème antérieur de A. Weil pour les courbes \cite[n° 48]{weil}, en utilisant le fait que $H^1(Z-Z',0)$ est isomorphe au module de Tate rationnel de la variété d'Albanese de $Z-Z'$ via un morphisme d'Albanese.} 

L'argument ci-dessus utilise implicitement le fait que les composantes irréductibles de codimension $1$ de $Z$ sont génériquement lisses. Pour obtenir ceci quand $k$ est imparfait, il suffit de passer à une extension radicielle finie convenable de $k$, ce qui ne change ni $H^2(X,1)$, ni $\Pic(X)\otimes \Q_l$, ni $G$.
\end{proof}

\begin{thm}\label{t1} $T(X)$ est vrai pour les $k$-variétés lisses de dimension $d$ si et seulement s'il est vrai pour les $k$-variétés projectives lisses de dimension $d$.
\end{thm}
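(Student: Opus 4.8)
The plan is to prove the nontrivial implication: granting $T(Y)$ for every smooth projective $k$-variety $Y$ of dimension $d$, I want $T(X)$ for an arbitrary smooth $X$ of dimension $d$; the reverse implication is tautological. Since $\Pic(X)\otimes\Q_l$ and $H^2(X,1)$ decompose over the connected (hence irreducible, $X$ being smooth) components of $X$, I may assume $X$ irreducible.

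In characteristic zero the geometry is transparent thanks to Hironaka. I would first compactify $X$ into a projective variety (Nagata and Chow), then resolve its singularities without touching the smooth open $X$, obtaining a smooth projective $\bar X$ of dimension $d$ in which $X$ sits as an open subscheme. Since $T(\bar X)$ holds by hypothesis, Proposition \ref{p2}b) delivers $T(X)$ immediately, with no recourse to Lemme \ref{l2}.

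In arbitrary characteristic resolution is not available, so I would replace it by de Jong's theorem on alterations: there is a proper surjective generically finite morphism $\pi:X'\to X$ with $X'$ smooth quasi-projective of dimension $d$, realised as an open subscheme of a smooth projective variety $\bar{X}'$ of dimension $d$. By Proposition \ref{p2}a) it suffices to produce $T(U)$ for a single dense open $U\subseteq X$, so I am free to shrink $X$. Over a suitable such $U$ the generically finite (and proper) map $\pi$ becomes finite, and its restriction $U':=\pi^{-1}(U)\to U$ is then finite and, by miracle flatness ($U$ regular, $U'$ Cohen--Macaulay, both of pure dimension $d$), automatically flat. Now $U'$ is open in the smooth projective $\bar{X}'$, so $T(\bar{X}')$ yields $T(U')$ by Proposition \ref{p2}b); Lemme \ref{l2} applied to the finite flat morphism $U'\to U$ upgrades this to $T(U)$; and $T(X)$ follows, as planned, from Proposition \ref{p2}a).

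The only genuine difficulty is geometric, not cohomological: one must manufacture a smooth projective variety of the same dimension $d$ joined to $X$ through exactly the two operations that the present section already controls, namely open immersion (Proposition \ref{p2}) and finite flat transfer (Lemme \ref{l2}). In characteristic $0$ Hironaka does this with a single open immersion; in positive characteristic one is compelled to pass through an alteration of positive generic degree, and this is precisely the reason Lemme \ref{l2} was isolated beforehand. The ancillary facts---invariance of dimension under alteration and compactification, finiteness of $\pi$ over a dense open, and the ensuing flatness---are entirely standard.
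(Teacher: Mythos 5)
Your proposal is correct and follows essentially the same route as the paper: the identical chain (de Jong alteration, then Proposition \ref{p2}~b) on the preimage open, then Lemme \ref{l2} for the finite flat transfer, then Proposition \ref{p2}~a) to go back to $X$), the only cosmetic difference being that the paper first chooses a proper compactification $X\inj X_0$ and alters $X_0$, whereas you invoke de Jong's theorem directly on $X$, whose statement already supplies the smooth projective compactification $\bar X'$. Your Hironaka shortcut in characteristic zero and the miracle-flatness justification of generic flatness are fine but inessential refinements of the same argument.
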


\begin{proof}  Soit $X$ lisse de dimension $d$. Choisissons une immersion ouverte dense $X\inj X_0$ où $X_0$ est propre. D'après \cite[Th. 4.1]{dJ}, on peut trouver une altération $\pi:\tilde X\to X_0$ avec $\tilde X$ projective lisse et $\pi$ génériquement fini. Soit $U\subset X$ un ouvert tel que $\pi_{|\pi^{-1}(U)}$ soit fini et plat. 

Supposons $T(\tilde X)$ vrai. Par la proposition \ref{p2} b), $T(\pi^{-1}(U))$ est vrai. D'après le lemme \ref{l2}, $T(U)$ est donc vrai, et enfin $T(X)$ est vrai par la proposition \ref{p2} a).
\end{proof}

\section{Changement de corps de base}

\begin{prop}\label{t2} Soit $K/k$ une extension de corps de type fini. Alors $T(k)\iff T(K)$.
\end{prop}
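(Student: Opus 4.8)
The plan is to reduce the general finitely generated extension to two elementary types and to treat each by descent of the cycle class map. Since $K/k$ is finitely generated, I choose a transcendence basis $t_1,\dots,t_n$ and factor $K/k$ as the purely transcendental extension $k\subset k(t_1,\dots,t_n)$ followed by the finite extension $k(t_1,\dots,t_n)\subset K$; adjoining the $t_i$ one at a time further reduces the transcendental part to the single step $F\subset F(t)$. As the statement is an equivalence, it suffices to prove both implications for (i) $K/k$ finite and (ii) $K=k(t)$. In either case Theorem \ref{t1} lets me reduce the conjecture to be proved to smooth \emph{projective} varieties over the relevant base field, a reduction that will be essential in (ii).

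For the finite case I compare the cohomology of a fixed smooth projective variety over $k$ and over $K$. When $K/k$ is separable, $k_s\otimes_k K\simeq\prod_\tau k_s$ over the embeddings $\tau\colon K\inj k_s$, so the relevant $H^2(-,1)$ is an induced $G_k$-module and Shapiro's lemma gives $H^2(-,1)^{G_k}=H^2(-,1)^{G_K}$, compatibly with classes of divisors. For $T(k)\Rightarrow T(K)$ I regard a smooth projective $K$-variety $Y$ as a smooth projective $k$-variety (composing $Y\to\operatorname{Spec}K\to\operatorname{Spec}k$) and read off $T(Y)$ over $K$ from $T(Y)$ over $k$ through this identification. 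For $T(K)\Rightarrow T(k)$ and a smooth projective $X/k$, a $G_k$-invariant class lies in $H^2(X,1)^{G_K}$, hence is $\cl_{X_K}(D)$ for a divisor $D$ on $X_K$ by $T(K)$; pushing forward along the finite flat map $\pi\colon X_K\to X$ gives $\cl_X(\pi_*D)=\pi_*\cl_{X_K}(D)=[K:k]\,\alpha$, since the transfer acts as multiplication by $[K:k]$ on $G_k$-invariants, so that $\alpha=\cl_X\bigl(\tfrac1{[K:k]}\pi_*D\bigr)$, in the spirit of Lemma \ref{l2}. Inseparable extensions are handled by passing to a radicial extension, which changes neither $H^2(X,1)$, nor $\Pic(X)\otimes\Q_l$, nor $G$, exactly as in the proof of Proposition \ref{p2} b).

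The heart is the implication $T(k)\Rightarrow T(K)$ for $K=k(t)$. Let $Y$ be smooth projective over $K$. I spread it out to a smooth projective morphism $f\colon\mathcal{Y}\to U$ over a dense open $U\subset\mathbf{A}^1_k$ with generic fibre $Y$; then $\mathcal{Y}$ is a smooth $k$-variety, \emph{open} because $U$ is, so $T(k)$ supplies $T(\mathcal{Y})$ — it is precisely here that the extension of the conjecture to non-proper varieties (Theorem \ref{t1}) is used. On geometric points, Deligne's degeneration criterion \cite{deligne1,deligne2} yields a canonical, hence $G_k$-equivariant, decomposition $H^2(\mathcal{Y}_{k_s},\Q_l(1))\simeq\bigoplus_{p+q=2}H^p(U_{k_s},R^qf_*\Q_l(1))$. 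The summand $H^0(U_{k_s},R^2f_*\Q_l(1))$ is the monodromy-invariant part of $H^2(Y_{K_s},\Q_l(1))$, and the inflation–restriction sequence for $1\to\ker(G_K\to G_k)\to G_K\to G_k\to1$ identifies its $G_k$-invariants with $H^2(Y,1)^{G_K}$; the resulting projection $H^2(\mathcal{Y},1)^{G_k}\to H^2(Y,1)^{G_K}$ is surjective and, being induced by restriction to the generic fibre, carries $\cl_{\mathcal{Y}}(\mathcal{D})$ to $\cl_Y(\mathcal{D}_{|Y})$. Since divisors on $Y$ extend to $\mathcal{Y}$ after shrinking $U$, combining the surjectivity of $\cl_{\mathcal{Y}}$ (from $T(\mathcal{Y})$) with that of the projection yields the surjectivity of $\cl_Y$, i.e. $T(Y)$. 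I expect the $G_k$-equivariance of this splitting to be the main obstacle: a mere $E_2$-degeneration would only produce an extension of $G_k$-modules whose possible non-splitting could obstruct lifting invariant classes to algebraic ones, and it is exactly Deligne's \emph{decomposition} (via relative hard Lefschetz for the projective $f$) that repairs this — hence the need to keep $f$ projective while $U$, and therefore $\mathcal{Y}$, remain open over $k$.

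For the converse $T(K)\Rightarrow T(k)$ with $K=k(t)$ I avoid the spectral sequence. Given a smooth projective $X/k$, the family $X\times_k\mathbf{A}^1\to\mathbf{A}^1$ is constant, so the geometric monodromy is trivial and $H^2(X_K,1)^{G_K}=H^2(X,1)^{G_k}$. A $G_k$-invariant class $\alpha$ is $\cl_{X_K}(D)$ by $T(K)$; I spread $D$ to a divisor on $X\times_k V$ for some dense open $V$, restrict to the fibre over a closed point $v\in V$ (whose residue field $k(v)$ is finite over $k$), and note that, by constancy of the family, the class of this restriction is again $\alpha$. The finite case applied to $k(v)/k$ then exhibits $\alpha\in\IM\cl_X$, giving $T(X)$ and hence $T(k)$ by Theorem \ref{t1}.
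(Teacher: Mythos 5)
Your proof is correct, and its heart is exactly the paper's mechanism: spread out to a smooth projective family $f\colon\mathcal{Y}\to U$ over an open base, invoke the conjecture for the \emph{open} smooth total space (Theorem \ref{t1}), and use Deligne's decomposition \cite{deligne1,deligne2} to split the edge map Galois-equivariantly; you also correctly diagnose why mere $E_2$-degeneration would not suffice. The organization, however, genuinely differs. You dévisse through a transcendence basis, one variable at a time ($U\subset\mathbf{A}^1_k$) plus a finite layer, whereas the paper goes in one shot from the prime field $k_0$: it spreads $X$ over a smooth $k_1$-model $S$ (of arbitrary dimension) of the field of constants $L$, with $k_1$ the algebraic closure of $k_0$ in $L$. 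A payoff of the paper's route is that it never meets inseparable finite extensions: $k_0$ is perfect, so $k_1/k_0$ is separable, and $L/K$ is separable because $X$ is smooth. Your dévissage, by contrast, can leave an inseparable finite layer when $k$ is imperfect (a separating transcendence basis need not exist), and your one-line appeal to radicial invariance is the thinnest point of the proposal: the direction $T(K_{\mathrm{sep}})\Rightarrow T(K)$ requires relating a smooth $K$-variety to one over $K_{\mathrm{sep}}$ via a Frobenius twist, not merely noting that $H^2$, $\Pic\otimes\Q_l$ and $G$ are insensitive to radicial extensions (in Proposition \ref{p2} b) the paper only \emph{enlarges} the base field radicially, which is the easy direction); this is standard but deserves a sentence. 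For the converse $T(k(t))\Rightarrow T(k)$ you specialize the divisor at a closed point of the constant family and transfer along $k(v)/k$, where the paper instead quotes the invariance of $\NS\otimes\Q_l$ under the extension $K/k_1$ \cite[prop. 5.5]{adjoints}; both work, yours being more hands-on. Two small corrections: Deligne's splitting is not canonical (it depends on a relative hyperplane class), and its $G_k$-equivariance comes from the isomorphism $Rf_*\Q_l\simeq\bigoplus_q R^qf_*\Q_l[-q]$ holding in the derived category over the arithmetic base $U$ itself, not from canonicity; and your remark that divisors on $Y$ extend to $\mathcal{Y}$ is superfluous in that direction — one lifts the invariant class, applies $T(\mathcal{Y})$, and restricts the resulting divisor to the generic fibre.
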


\begin{proof} En quatre étapes; les deux premières et la dernière sont bien connues et valables en toute codimension; elles sont rappelées pour la clarté de l'exposition. Pour plus de précision, on note ici $G_K=Gal(K_s/K)$.

1) Soit $X$ lisse sur $K$,  connexe et de corps des constantes $L$. Comme $X$ est lisse, $L/K$ est séparable. Je dis que, avec des notations évidentes, $T(X/K)\iff T(X/L)$. En effet, le $G_K$-module $H^2(X/K,1)$ est induit du $G_L$-module $H^2(X/L,1)$, donc $H^2(X/K,1)^{G_K}\iso H^2(X/L,1)^{G_L}$.

2) L'énoncé est vrai si $K/k$ est finie séparable. En effet, $\Rightarrow$ résulte immédiatement de 1). Pour $\Leftarrow$, on se ramène à $K/k$ galoisienne en considérant sa clôture galoisienne; si $X$ est lisse sur $k$, $T(X_K)$ implique alors $T(X)$ en prenant les invariants sous $Gal(K/k)$.

3) Soit $k_0$ le sous-corps premier de $K$; montrons que $T(k_0)\Rightarrow T(K)$. Il suffit grâce au théorème \ref{t1}  de montrer que $T(k_0)$ implique $T(X)$ pour toute $K$-variété projective lisse $X$. L'argument est une version simplifiée de celle de \cite[th. 8.32 a)]{sheaf}.

On peut supposer $X$ connexe. Soit $L$ son corps des constantes, et soit $k_1$ la fermeture algébrique de $k_0$ dans $L$. Puisque $k_1$ est parfait, l'extension $L/k_1$ est régulière; choisissons-en un $k_1$-modèle lisse $S$. Quitte à remplacer $S$ par un ouvert, étendons $X$ en un $S$-schéma projectif lisse $f:\sX\to S$. Notant $\bar S=S\otimes_{k_1} k_s$, on a la suite spectrale de Leray (de $\Q_l[[G_{k_1}]]$-modules)
\[E_2^{p,q}=H^p(\bar S,R^qf_* \Q_l(1))\Rightarrow H^{p+q}(\sX,1).\]

D'après \cite{deligne1} (voir aussi \cite{deligne2}), le choix d'une section hyperplane lisse $\sY/S$ de $\sX/S$ et le théorème de Lefschetz difficile \cite[th. 4.1.1]{weilII} font dégénérer cette suite spectrale, montrant aussi que la filtration sur l'aboutissement est scindée\footnote{Le résultat précis de \cite[Prop. 2.4]{deligne1} ou de \cite[\S 2 ou \S 3]{deligne2} est que $Rf_*\Q_l$ est isomorphe à $\bigoplus_{i\ge 0} R^if_*\Q_l[-i]$ dans la catégorie dérivée.}. En particulier, l'homomorphisme ``edge'' $H^2(\sX,1)\to E_2^{0,2}=H^2(X,1)^{\pi_1(\bar S)}$ admet une section $G_{k_1}$-équivariante; par conséquent, $H^2(\sX,1)^{G_{k_1}}\to H^2(X,1)^{G_L}$ est \emph{surjectif}; en effet, $G_L\to \pi_1(S)$ est surjectif puisque $S$ est géométriquement connexe. Avec les notations de 1), on a donc $T(\sX/k_0)\Rightarrow T(\sX/k_1) \Rightarrow T(X/L)\Rightarrow T(X/K)$. (Noter que $k_1/k_0$ est séparable puisque $k_0$ est parfait.)

4) Finalement, montrons que $T(K)\Rightarrow T(k_0)$, ce qui terminera la démonstration. Soit, comme ci-dessus, $k_1$ la fermeture algébrique de $k_0$ dans $K$. Donnons-nous une $k_0$-variété projective lisse $X$; rappelons que $\NS(X_{k_1})\otimes \Q_l\to \NS(X_K)\otimes \Q_l$ est bijectif (c'est vrai en général pour les classes de cycles modulo l'équivalence algébrique, voir par exemple \cite[prop. 5.5]{adjoints} et sa preuve). Ceci montre que $T(X_K)\Rightarrow T(X_{k_1})$; mais d'autre part $T(X_{k_1})\Rightarrow T(X)$ par 2).
\end{proof}

\begin{thm}\label{t0} Soit $k_0$ le sous-corps premier de $k$. Alors $T(S)$ pour toutes les surfaces projectives lisses $S$ sur $k_0$ $\Rightarrow$ $T(k)$.
\end{thm}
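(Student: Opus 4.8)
Le plan est de se ramener, grâce aux résultats déjà établis, à la conjecture de Tate pour les variétés projectives lisses sur le corps premier $k_0$, puis de descendre aux surfaces. D'abord, la proposition \ref{t2} appliquée à l'extension de type fini $k/k_0$ donne $T(k)\iff T(k_0)$; il suffit donc de prouver $T(k_0)$. Ensuite, par le théorème \ref{t1}, $T(k_0)$ équivaut à $T(X)$ pour toute $k_0$-variété projective lisse $X$. Reste à voir que $T(S)$ pour les surfaces entraîne $T(X)$ en toute dimension. En caractéristique $p>0$, c'est-à-dire pour $k_0=\F_p$, c'est précisément le théorème de de Jong et Morrow \cite{morrow}, et il n'y a rien à ajouter.

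Le cas de la caractéristique zéro, $k_0=\Q$, demande un argument. Soit $X/\Q$ projective lisse, que l'on peut supposer connexe de dimension $d$; les cas $d\le 2$ étant triviaux ou couverts par l'hypothèse, je me placerais en dimension $d\ge 3$. Par le théorème de Bertini (sur le corps infini $\Q$), je choisirais une surface $S\subset X$ projective lisse sur $\Q$, intersection de $X$ avec $d-2$ hyperplans généraux. Le théorème de Lefschetz faible, appliqué en degré $2$ le long de cette suite de sections hyperplanes, fournit un homomorphisme de restriction $\iota^*\colon H^2(X,1)\inj H^2(S,1)$ \emph{injectif} et $G$-équivariant.

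L'idée centrale est alors de transporter la question dans le monde de Hodge via les théorèmes de comparaison. Fixons un plongement $\overline{\Q}\inj\C$: le théorème de comparaison identifie $H^2(X,1)$ à $H^2_B(X_\C,\Q(1))\otimes\Q_l$, compatiblement aux classes de cycles et à la restriction $\iota^*$, qui devient un morphisme injectif de structures de Hodge. Soit $\alpha\in H^2(X,1)^G$. Sa restriction $\iota^*\alpha$ est $G$-invariante, donc algébrique sur $S$ par $T(S)$; par suite sa réalisation de Betti est une classe de Hodge (c'est la direction facile: une classe de cycle est de type $(1,1)$). Comme $\iota^*$ est un morphisme injectif de structures de Hodge, la réalisation de Betti de $\alpha$ est elle aussi de type $(1,1)$. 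Le théorème $(1,1)$ de Lefschetz \emph{pour $X$} montre alors que $\alpha$ est algébrique sur $X_\C$, donc sur $X_{\overline{\Q}}$ puisque $\NS(X_{\overline{\Q}})\otimes\Q_l\iso\NS(X_\C)\otimes\Q_l$ (invariance du groupe de Néron--Severi par extension de corps algébriquement clos).

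Il resterait à redescendre de $\overline{\Q}$ à $\Q$. La classe $\alpha$ étant à la fois $G$-invariante et dans $\IM\cl_{X_{\overline{\Q}}}$, et $\cl$ étant injective sur $\NS\otimes\Q_l$ (équivalences homologique et numérique coïncidant en codimension $1$, \cite{matsusaka}), elle provient d'une classe $G$-invariante de $\NS(X_{\overline{\Q}})\otimes\Q_l$; comme l'action de $G$ sur le groupe de type fini $\NS(X_{\overline{\Q}})$ se factorise par un quotient fini, on a $(\NS(X_{\overline{\Q}})\otimes\Q_l)^G=\NS(X)\otimes\Q_l$, d'où $\alpha\in\IM\cl_X$ et finalement $T(X)$. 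La difficulté principale me semble résider dans l'articulation soigneuse des théorèmes de comparaison: l'invariance sous $G$ et le fait d'être une classe de Hodge sont a priori sans rapport, et c'est précisément la section surfacique jointe à $T(S)$ qui permet de les relier. Le reste (Lefschetz faible, descente galoisienne à coefficients rationnels) est standard.
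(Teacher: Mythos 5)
Votre squelette (réduction à $k_0$ par la proposition \ref{t2}, puis aux variétés projectives lisses par le théorème \ref{t1}, de Jong--Morrow en caractéristique $p$, argument via Lefschetz $(1,1)$ sur $\Q$) est bien celui du papier, mais votre argument en caractéristique zéro comporte une lacune véritable à l'étape décisive. La classe $\alpha$ vit dans $H^2_B(X_\C,\Q(1))\otimes\Q_l$, pas dans $H^2_B(X_\C,\Q(1))$: une classe $l$-adique $G$-invariante ne correspond à aucune classe de Betti \emph{rationnelle}. Or le théorème $(1,1)$ de Lefschetz porte sur les classes rationnelles (ou entières) de type $(1,1)$; pour l'appliquer comme vous le faites, il faudrait, avec $V=H^2_B(X_\C,\Q(1))$ et un plongement $\Q_l\inj\C$, l'égalité $(V\otimes\Q_l)\cap V^{1,1}=(V\cap V^{1,1})\otimes\Q_l$, et cette commutation de l'intersection au changement de coefficients est fausse pour des sous-espaces généraux: prenez $V=\Q^2$, $W=\C\cdot(1,\sqrt{2})$ (stable par conjugaison) et $l$ tel que $\sqrt{2}\in\Q_l$; alors $V\cap W=0$ mais $(V\otimes\Q_l)\cap W\neq 0$ pour un plongement convenable. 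Votre pas \og $\alpha$ est de type $(1,1)$, donc algébrique par Lefschetz $(1,1)$\fg{} est donc injustifié tel quel --- c'est le piège classique du passage Hodge/Tate.

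La réparation est courte et reste dans votre cadre: vous disposez de mieux que \og type $(1,1)$\fg, à savoir $\iota^*\alpha\in\NS(S_{\bar\Q})\otimes\Q_l$. Comme $\iota^*$ est défini sur $\Q$ en cohomologie de Betti et que $-\otimes\Q_l$ est exact, on a $\alpha\in C\otimes\Q_l$ où $C=(\iota^*)^{-1}\bigl(\NS(S_\C)\otimes\Q\bigr)\subset H^2_B(X_\C,\Q(1))$ est un sous-$\Q$-espace; tout élément de $C$ est une classe \emph{rationnelle} de type $(1,1)$ (injectivité de $\iota^*$ comme morphisme de structures de Hodge), donc algébrique par une application légitime de Lefschetz $(1,1)$, d'où $C\subset\NS(X_\C)\otimes\Q$, puis $\alpha\in\NS(X_{\bar\Q})\otimes\Q_l$, et votre descente galoisienne finale conclut. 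C'est exactement ce piège que la preuve du papier évite en ne manipulant que des $\Q$-espaces de classes algébriques: elle utilise Lefschetz difficile et la factorisation $i_*i^*$ de $\cup c_1(L)^{d-2}$, la bijectivité de $\cup c_1(L)^{d-2}\colon A^1_\hom\to A^{d-1}_\hom$ (Lieberman, via Lefschetz $(1,1)$) étant établie sur $\Q$ puis étendue à $\Q_l$ par platitude; votre variante par Lefschetz faible seul, une fois corrigée, est d'ailleurs un peu plus économique. Deux remarques encore: en caractéristique $p$, \og il n'y a rien à ajouter\fg{} est optimiste --- le papier signale et corrige un point délicat de la réduction de Morrow (Lefschetz faible pour le groupe de Picard de diviseurs réduits, faute d'annulation de Kodaira) --- même si citer le théorème publié est défendable; et il faut en principe traiter le corps des constantes $k_1$ de $X$ (point 1 de la preuve de la proposition \ref{t2}), ce que votre descente finale absorbe pour l'essentiel.
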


\begin{proof} D'après la proposition \ref{t2}, on se ramène à $k=k_0$. Si $k=\Q$, soit $X$ une variété projective lisse connexe de dimension $d\ge 2$, de corps des constantes $k_1$. D'après le point 1) de la preuve de la rpoposition \ref{t2}, on peut remplacer $\Q$ par $k_1$. Choisissons un plongement complexe $k_1\inj \C$.  Par les théorèmes de comparaison, l'équivalence homologique $l$-adique pour $X\otimes_{k_1}\bar \Q$ coïncide avec la même pour $X\otimes_{k_1}\C$, qui coïncide avec l'équivalence homologique pour la cohomologie de Betti; notons $A^i_\hom(\bar X)$ les quotients correspondants.  Choisissons un $k_1$-plongement projectif $X\inj \P^N$, d'où un faisceau très ample $L$; le théorème de Lefschetz fort (pour la cohomologie de Betti) implique que 
$\cup c_1(L)^{d-2}:A^1_\hom(\bar X)\to A^{d-1}_\hom(\bar X)$ est injectif, et même \emph{bijectif} grâce au  théorème (1,1) de Lefschetz \cite[preuve du cor. 1]{lieb}. Comme $A^*_\hom(X)\iso A^*_\hom(\bar X)^{Gal(\bar \Q/k_1)}$, on a aussi un isomorphisme $\cup c_1(L)^{d-2}:A^1_\hom(X)\iso A^{d-1}_\hom(X)$. Mais, si $i:S\inj X$ est une surface (lisse, connexe) ample donnée par le théorème de Bertini, cet isomorphisme se factorise en
\[A^1_\hom(X)\by{i^*} A^1_\hom(S)\by{i_*} A^{d-1}_\hom(X)\]
et de même pour l'isomorphisme correspondant $H^2(X,1)\iso H^{2d-2}(X,d-1)$, de manière compatible aux classes de cycles. Une petite chasse aux diagrammes montre alors que $T(S)\Rightarrow T(X)$.

Si $k=\F_p$, Morrow se ramène d'abord au cas $\dim X\le 3$ par le théorème de Lefschetz faible pour la cohomologie $l$-adique \cite[cor. I.9.4]{fk} et pour le groupe de Picard \cite[cor. 4.9 b)]{SGA2}, puis au cas d'une surface dans \cite[th. 4.3]{morrow}. Le premier point est un peu délicat, comme me l'a fait remarquer Juan Felipe Castro Cárdenas: il n'est pas clair que, pour le groupe de Picard, Lefschetz faible soit vrai pour les diviseurs \emph{réduits}, en l'absence du théorème d'annulation de Kodaira (cf. \cite[rem. 4.10]{SGA2}). Néanmoins, l'argument de la preuve de \cite[th. 4.3]{morrow} pour réduire le cas de la dimension $3$ à celui de la dimension $2$ marche aussi bien, et même mieux, pour réduire le cas de la dimension $d+1$ à celui de la dimension $d$ quand $d\ge 3$: dans ce cas, toutes les inclusions horizontales du diagramme de la page 3495 sont des égalités.
\end{proof}

\begin{rque}\label{r2} D'après \cite{lrs}, la conjecture $T(S)$ pour les surfaces $S$ sur un corps fini $k$ est équivalente à la conjecture de Birch et Swinnerton-Dyer pour les jacobiennes de courbes sur les corps de fonctions d'une variable $K/k$. Cette dernière implique la même conjecture pour toute variété abélienne $A$ définie sur $K$: en effet, ladite conjecture est équivalente à la finitude de la composante $l$-primaire $\Sha(K,A)\{l\}$ du groupe de Tate-\v Safarevi\v c $\Sha(K,A)$ \cite{schneider,kt}. Si $i:C\inj A$ est une courbe ample, l'homomorphisme $i_*:J(C)\to A$ est surjectif (Weil, voir \cite[Lemma 2.3]{murre}), donc il existe $\sigma:A\to J(C)$ tel que $i_*\circ  \sigma$ soit la multiplication par un entier $n>0$, d'où $n\Sha(K,A)\subset i_* \Sha(K, J(C))$. Mais on sait que $\Sha(K,A)\{l\}$ est de cotype fini, donc la finitude de $\Sha(K, J(C))\{l\}$ implique celle de $\Sha(K,A)\{l\}$.
\end{rque}

À ce stade, il est obligatoire de terminer avec la question évidente:

\begin{qn} Peut-on réduire le cas de caractéristique zéro à celui de la caractéristique positive?
\end{qn}

Par changement de base propre et lisse et par le théorème de \v Cebotarev, cette question est équivalente à la suivante:

\begin{qn} Soit $S$ une $\Q$-surface projective lisse, et soit $\alpha\in H^2(S,1)$. Supposons que, pour (presque) tout nombre premier $p$ de bonne réduction, la spécialisation de $\alpha$ en $p$ soit algébrique. Est-ce que $\alpha$ est algébrique?
\end{qn}


\begin{thebibliography}{II}
\bibitem{ambrosi} E. Ambrosi {\it A note on the behaviour of the Tate conjecture under finitely generated field extensions}, Pure Appl. Math. Q. {\bf 14} (2018), 515--527.
\bibitem{andre} Y. André {\it Pour une théorie inconditonnelle des motifs}, Publ. Math. IHÉS {\bf 83} (1996), 5-49.
\bibitem{weilII} P. Deligne {\it La conjecture de Weil, II}, Publ. Math. IHÉS {\bf 52} (1980), 137--252.
\bibitem{deligne1} P. Deligne {\it Théorème de Lefschetz et critères de dégénérescence de suites spectrales}, Publ. Math. IHÉS {\bf 35} (1968), 259--278. 
\bibitem{deligne2} P. Deligne {\it Décompositions dans la catégorie dérivée}, {\it in} Motives, Proc. Symposia Pure Math. {\bf 55} (1), AMS, 1994, 115--128.
\bibitem{fk} E. Freitag, R. Kiehl, Étale cohomology and the Weil conjecture, Springer, 1988. 
\bibitem{fulton} W. Fulton Intersection theory, Springer, 2ème éd., Springer, 1998.
\bibitem{SGA2} A. Grothendieck {\it Application aux schémas algébriques projectifs}, Exp. XII de SGA 2, nouvelle édition, Doc. mathématiques {\bf 4}, SMF, 2005, 109--134.
\bibitem{cycle} A. Grothendieck {\it La classe de cohomologie associée à un cycle} (rédigé par P. Deligne), {\it in}  SGA 4 1/2, Lect. Notes in Math. {\bf 569}, Springer, 1977, 129--153.
\bibitem{jannsen} U. Jannsen Mixed motives and algebraic $K$-theory, Lect. Notes {\bf 1400}, Springer, 1990.
\bibitem{dJ} A.J. de Jong {\it Smoothness, semi-stability and alterations}, Publ. Math. IHES {\bf 83} (1996), 51--93.
\bibitem{sheaf} B. Kahn {\it A sheaf-theoretic reformulation of the Tate conjecture}, prépublication, 1998, \url{https://arxiv.org/abs/math/9801017}.
\bibitem{glr} B. Kahn {\it The Geisser-Levine method revisited and algebraic cycles over a finite field}, Math. Ann. {\bf 324} (2002), 581--617.
\bibitem{kmp} B. Kahn,  J.P. Murre, C. Pedrini {\it On the transcendental part of the motive of a surface},  {\it in} Algebraic cycles and motives (for J.P. Murre's 75th birthday), LMS Lect. Notes Series {\bf 344} (2), Cambridge University Press, 2007, 143--202.
\bibitem{adjoints} B. Kahn {\it Motifs et adjoints}, Rendiconti Sem. Mat. Univ. Padova {\bf 139} (2018), 77--128.
\bibitem{kt} K. Kato, F. Trihan {\it On the conjectures of Birch and Swinnerton-Dyer in characteristic $p > 0$}, Invent. Math. {\bf 53} (2003), 537--592.
\bibitem{li-zhang} C. Li, W. Zhang {\it A note on Tate’s conjectures for abelian varieties}, Ess. Numb. Th. {\bf 1} (2022), 41--50.
\bibitem{lrs} S. Lichtenbaum ; N. Ramachandran ; T. Suzuki {\it The conjectures of Artin-Tate and Birch-Swinnerton-Dyer}, EPIGA {\bf 6} (2022), \url{https://arxiv.org/abs/2101.10222}. 
\bibitem{lieb} D. Lieberman {\it Numerical and homological equivalence of algebraic cycles on Hodge manifolds}, Amer. J. Math. {\bf 90} (1968), 366--374.
\bibitem{matsusaka} T. Matsusaka {\it The criteria for algebraic equivalence and the torsion group}, Amer. J. Math. {\bf 79} (1957), 53--66.
\bibitem{morrow} M. Morrow {\it A variational Tate conjecture in crystalline cohomology}, J. Eur. Math. Soc. {\bf 21} (2019),  3467--3511.
\bibitem{murre} J. P. Murre: On the motive of an algebraic surface,  J. Reine Angew. Math. {\bf 409} (1990), 190--204.
\bibitem{schneider} P. Schneider {\it Zur Vermutung von Birch und Swinnerton-Dyer über globalen Funktionenkörpern}, Math. Ann. {\bf 260} (1982), 495--510.
\bibitem{tate0} J. Tate {\it Algebraic cycles and poles of zeta functions}, {\it in}  Arithmetical Algebraic Geometry (Conference, Purdue University, Lafayette, Ind., 1963). Sous la dir. d’O.F.G. Schilling. Harper \& Row, 1965, 93--110.
\bibitem{analog} J. Tate {\it On the conjectures of Birch and Swinnerton-Dyer and a geometric analog},  Sém. Bourbaki, années 1964/65 --1965/66, exposé {\bf 306}.
\bibitem{tate} J. Tate {\it Conjectures on algebraic cycles in $l$-adic cohomology}, {\it in} Motives, Proc. Symposia
Pure Math. {\bf 55} (1), AMS, 1994, 71--83.
\bibitem{weil} A. Weil Variétés abéliennes et courbes algébriques, Hermann, 1948.
\end{thebibliography}
\end{document}